\newcommand{\bi}{\bsi}
\newcommand{\bx}{\bsx}
\newcommand{\PSL}{PSL}
\newcommand{\pic}{\operatorname{pic}}
\newcommand{\Sch}{\scS ch}
\newcommand{\Grpd}{\scG rpd}
\newcommand{\dgend}{\operatorname{end}}
\newcommand{\bb}{\bsb}
\newcommand{\be}{\bse}
\newcommand{\scHom}{\mathop{\scH om}\nolimits}
\newcommand{\fr}{\mathrm{fr}}
\newcommand{\subsect}
{\ifnum \value{subsection}<1
\refstepcounter{subsection}
\noindent
{\bf \thesubsection.} \else
\ \\
\refstepcounter{subsection}
\noindent
{\bf \thesubsection.} \fi}
\title{Weighted projective lines as fine moduli spaces
of quiver representations}
\author{Tarig Abdelgadir and Kazushi Ueda}
\date{}
\begin{document}

\maketitle

\begin{abstract}
We describe weighted projective lines
in the sense of Geigle and Lenzing
by a moduli problem on the canonical algebra of Ringel.
We then go on to study generators
of the derived categories of coherent sheaves
on the total spaces of their canonical bundles, and
show that they are rarely tilting.
We also give a moduli construction for these total spaces
for weighted projective lines with three orbifold points.
\end{abstract}

\section{Introduction}
 \label{sc:introduction}

{\em Weighted projective lines}
in the sense of Geigle and Lenzing
\cite{Geigle-Lenzing_WPC}
are one-dimensional smooth rational Deligne-Mumford stacks
without generic stabilizers.
They have fascinating connections
with many branches of mathematics,
such as representation theory of finite-dimensional algebras,
singularity theory,
hypergeometric functions, and
automorphic forms.
The reader can consult the review article \cite{MR2931898} and references therein
for more on such interactions.

Let $Q$ be the quiver
\vspace{3mm}
\begin{equation} \label{eq:quiver0}
\begin{psmatrix}[colsep=20mm,rowsep=10mm]
 & v_{1,1} & v_{1,2} & \cdots & v_{1,p_1-1} \\
 & v_{2,1} & v_{2,2} & \cdots & v_{2,p_2-1} \\[-6mm]
  v_0  & & & & & v_1 \\[-8mm]
 & \vdots & \vdots & & \vdots & \\[-4mm]
 & v_{n,1} & v_{n,2} & \cdots & v_{n,p_n-1}
\end{psmatrix}
\psset{nodesep=5pt,arrows=->,shortput=nab}
\ncdiag[angleA=50,angleB=200,arm=0]{3,1}{1,2}^{a_{1,0}}
\ncline{1,2}{1,3}^{a_{1,1}}
\ncline{1,3}{1,4}^{a_{1,2}}
\ncline{1,4}{1,5}^{a_{1,p_1-2}}
\ncdiag[angleA=-10,angleB=140,arm=0]{1,5}{3,6}^{a_{1,{p_1-1}}}
\ncdiag[angleA=10,angleB=190,arm=0]{3,1}{2,2}_{a_{2,0}}
\ncline{2,2}{2,3}^{a_{2,1}}
\ncline{2,3}{2,4}^{a_{2,2}}
\ncline{2,4}{2,5}^{a_{2,p_2-2}}
\ncdiag[angleA=-10,angleB=170,arm=0]{2,5}{3,6}_{a_{2,{p_2-1}}}
\ncdiag[angleA=-30,angleB=160,arm=0]{3,1}{5,2}_{a_{n,0}}
\ncline{5,2}{5,3}^{a_{n,1}}
\ncline{5,3}{5,4}^{a_{n,2}}
\ncline{5,4}{5,5}^{a_{n,p_n-2}}
\ncdiag[angleA=10,angleB=220,arm=0]{5,5}{3,6}_{a_{n,{p_n-1}}}
\end{equation}
and $I \subset \bC Q$ be the ideal of relations
generated by
\begin{equation} \label{eq:relations0}
 a_{i, p_i-1} \cdots a_{i, 0}
  - a_{2, p_2-1} \cdots a_{2, 0}
  + \lambda_i a_{1, p_1-1} \cdots a_{1, 0}, \qquad i = 3, \ldots, n.
\end{equation}
The path algebra $A := \bC Q / I$ with relations is the canonical algebra
introduced by Ringel \cite{MR774589}.
By \cite[Proposition 4.1]{Geigle-Lenzing_WPC},
there is a tilting object $E = \bigoplus_{i=1}^N E_i$ in $D^b \coh \bX$
obtained as the direct sum of line bundles $(E_i)_{i=1}^n$,
whose endomorphism algebra is isormorphic to $A$;
\begin{align} \label{eq:AE}
 A \cong \End E.
\end{align}
Then Morita theory for derived categories \cite{Bondal_RAACS,Rickard}
gives an equivalence
\begin{align} \label{eq:der_equiv0}
 D^b \coh \bX \cong D^b \module A
\end{align}
of triangulated categories.
%
%
Let $Q_0$ and $Q_1$ be the set of vertices and arrows of the quiver $Q$,
and $\bZ^{Q_0}$ be the free abelian group generated by the symbols $\be_\bi$
for $\bi \in Q_0$.
Consider the natural homomorphism
\begin{align} \label{eq:pic}
 \pic : \bZ^{Q_0} \to \Pic \bX
\end{align}
sending $\be_\bi$ to the line bundle corresponding to $\bi \in Q_0$.
The kernel of $\pic$ will be denoted by
\begin{align} \label{eq:K}
 K := \Ker (\pic).
\end{align}
In Section \ref{sc:refined},
we give an adaptation of the notion of {\em refinements} from \cite{Abdelgadir:2012}
to the current non-toric setting,
and define the moduli space $\scM_\theta(Q, I, K; \bsd)$
of $K$-refined representations of the quiver $(Q, I)$ with relations.
Here $\bsd$ is the dimension vector and $\theta$ is the stability parameter.
The first main result in this paper is the following:

\begin{theorem} \label{th:main1}
For a suitable choice $\vartheta$ of a stability parameter,
there is an isomorphism
$\bX \cong \scM_\vartheta(Q,I,K; \bsone)$
of stacks.
The pull-back of the tautological bundle on $\scM_\vartheta(Q,I,K;\bsone)$
by this isomorphism is isomorphic to $E$.
\end{theorem}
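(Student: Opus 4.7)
The strategy is to construct a morphism $\Phi\colon \bX \to \scM_\vartheta(Q, I, K; \bsone)$ directly from the tilting bundle $E$, and then invert it using the derived equivalence \eqref{eq:der_equiv0}.

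To construct $\Phi$, consider a $T$-valued point $f\colon T \to \bX$. The pulled-back line bundles $f^\ast E_i$, together with the morphisms $f^\ast E_{s(a)} \to f^\ast E_{t(a)}$ obtained from the arrows $a \in Q_1$ viewed as elements of $A$ via \eqref{eq:AE}, assemble into a $T$-flat family of representations of $(Q, I)$ with dimension vector $\bsone$. The relations among the $E_i$ in $\Pic \bX$ are precisely those parametrized by $K$, so they supply the $K$-refinement datum. A stability parameter $\vartheta$ is then dictated by the requirement that this family lie in the $\vartheta$-stable locus; the natural candidate is the image of a generic ample class on $\bX$ under the map from $\Pic \bX$ to the space of stability parameters.

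For the inverse, given a $K$-refined $\vartheta$-stable representation $V$, the underlying $A$-module corresponds under \eqref{eq:der_equiv0} to an object $\scF_V \in D^b \coh \bX$. For a suitable choice of $\vartheta$, every such $\scF_V$ is forced to be the residue sheaf of a unique closed point of $\bX$, and the assignment $V \mapsto \scF_V$ then yields the inverse $\Psi$. That $\Phi$ and $\Psi$ are mutually inverse follows since, by construction, $\Phi(x)$ is the representation associated with the residue sheaf at $x$. The claim about the tautological bundle is immediate from the construction of $\Phi$, because the tautological representation on the moduli side has the universal line bundle $L_i$ at each vertex, and $\Phi^\ast L_i \cong E_i$.

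The principal difficulty lies in characterizing $\vartheta$-stable $K$-refined representations as residue sheaves on $\bX$. This requires a chart-by-chart analysis, and at the orbifold points one must verify that the cyclic inertia groups of $\bX$ match the automorphism groups of the corresponding refined representations. Carrying out this analysis in families, rather than merely on closed points, is what promotes the set-theoretic bijection to an isomorphism of stacks.
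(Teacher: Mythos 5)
Your strategy---construct $\Phi$ from the tilting bundle and invert it via the derived equivalence \eqref{eq:der_equiv0}---is genuinely different from the paper's, which never invokes the derived equivalence: the paper presents $\scM_\vartheta(Q,I,K;\bsone)$ directly as a quotient stack $[Z^s/\bG_m^{Q_0}]$, uses the nonvanishing refinement coordinates to cut the atlas down to $\Spec S_\bX\setminus\bszero$ and the gauge group down to a subgroup $H\cong G=\Spec\bC[L]$, and then matches this with the Cox-ring presentation $\bX=[(\Spec S_\bX\setminus\bszero)/G]$. Your route could in principle work, but as written it has a genuine gap at its central step. The claim that every $\vartheta$-stable $\scF_V$ ``is forced to be the residue sheaf of a unique closed point'' fails at the orbifold points: for the residue sheaf $\scO_{\bx_i}$ one has $\Hom(\scO_\bX(j\vecx_i),\scO_{\bx_i})=0$ for $0<j<p_i$ (the relevant $\Gamma_i$-invariants vanish), so $\Hom(E,\scO_{\bx_i})$ does not have dimension vector $\bsone$, and the same is true for every character twist of $\scO_{\bx_i}$. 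The representations that actually occur over $\bx_i$ are fibres of the universal family, on which $\Gamma_i$ acts through distinct characters at the branch-$i$ vertices; identifying these correctly is exactly where the content lies.

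More seriously, your inverse $\Psi$ is built only from the underlying $A$-module and so forgets the refinement data $(f_{i,j},g)$. That data is not decorative: at $\bx_i$ all branch-$i$ arrows vanish, disconnecting the vertices $v_{i,1},\dots,v_{i,p_i-1}$, so the automorphism group of the bare representation is a positive-dimensional torus; it is the constraints $t_{i,j+1}t_{i,j}^{-1}t_{i,1}^{-1}t_0=1$ from preserving the $f_{i,j}$, together with $t_0=1$ from $g$, that reduce it to the cyclic group of order $p_i$, i.e.\ to $\Gamma_i$. An inverse of stacks must act on automorphisms as well as on objects, so it cannot be defined through the derived equivalence alone. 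Finally, the step you defer to ``a chart-by-chart analysis in families'' is essentially the whole proof, and you also leave $\vartheta$ unspecified; the paper's explicit choice \eqref{eq:vartheta}, its identification of the unstable locus with $\bszero$, and its global quotient-stack computation are precisely the devices that carry out this analysis in families in one stroke. Some substitute for them would have to be supplied before the proposal counts as a proof.
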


A weighted projective line $\bX$ is called
{\em spherical},
{\em Euclidean}, or
{\em hyperbolic}
if the orbifold Euler characteristic
$$
 \chi_{\mathrm{orb}}(\bX)
  = 2 - \sum_{i=1}^n \lb 1 - \frac{1}{p_i} \rb
$$
is positive, zero, or negative respectively.
Spherical weighted projective lines naturally correspond
to simply-laced Dynkin diagrams as follows:
\begin{table}[h]
$$
\begin{array}{|c|c|c|c|c|c|}
 \hline
 \text{type} & A_{p+q} & D_{n+2} & E_6 & E_7 & E_8 \\
 \hline
 \text{weight} & (1,p,q) & (2,2,n) & (2,3,3) & (2,3,4) & (2,3,5) \\
 \hline
\end{array}
$$
\caption{Spherical weights}
\label{tb:spherical_wt}
\end{table}

\noindent
When $\bX$ is spherical,
the spectrum $Y_0 = \Spec R$
of the anti-canonical ring
\begin{align*}
 R
  = \bigoplus_{k=0}^\infty H^0(\scO_\bX(-k \omega_\bX)),
\end{align*}
is isomorphic to the corresponding Kleinian singularity
\cite[Proposition 8.5]{Geigle-Lenzing_PC}.
The total space $\bK$ of the canonical bundle of $\bX$
has orbifold singularities of types $\frac{1}{p_i}(1, -1)$
at the $i$-th orbifold points in the zero-section $\bX \subset \bK$,
so that the coarse moduli space $K$ of $\bK$
is a partial resolution of $Y$
with three simple singularities
of types $A_{p_1-1}$, $A_{p_2-1}$ and $A_{p_3-1}$.
The minimal resolution $Y$ of $K$
is the minimal resolution of $Y_0$,
and the exceptional divisor is a tree of $(-2)$-curves
where three chains of $(-2)$-curves
consisting of $p_1-1$, $p_2-1$ and $p_3-1$ rational curves
is connected to the $(-2)$-curve
obtained as the strict transform
of the zero-section in $K$.
One has a derived equivalence
$$
 D^b \coh \bK \cong D^b \coh Y
$$
by Kapranov and Vasserot
\cite{Kapranov-Vasserot}.
The pull-back $\Etilde = \pi^* E$
of the tilting object $E$
by the natural projection $\pi : \bK \to \bX$
is a generator of $D^b \coh \bK$.
Recall that $\Etilde$ is said to be {\em acyclic}
if $\Hom^i(\Etilde, \Etilde) = 0$ for any $i \ne 0$.
We prove the following in Section \ref{sc:tilting}
by a case-by-case analysis through Figure \ref{tb:spherical_wt}:

\begin{theorem} \label{th:tilting}
$\Etilde$ is acyclic if and only if $\bX$ is spherical of type $A$.
\end{theorem}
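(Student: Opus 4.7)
The plan is to reduce the computation of $\Hom^i(\Etilde, \Etilde)$ on $\bK$ to a computation on $\bX$, then to a vanishing question for global sections of line bundles on $\bX$, and finally to carry out a case analysis indexed by Table~\ref{tb:spherical_wt}.

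First, since $\pi \colon \bK \to \bX$ is the total space of the line bundle $\omega_\bX$, one has $\pi_* \scO_\bK \cong \bigoplus_{k \geq 0} \scO_\bX(-k\omega_\bX)$. The projection formula together with the one-dimensionality of $\bX$ gives
\begin{align*}
\Hom^i_{\bK}(\Etilde, \Etilde) \;\cong\; \bigoplus_{k \geq 0} \Ext^i_{\bX}(E, E \otimes \scO_\bX(-k\omega_\bX)),
\end{align*}
with no contribution from $i \geq 2$. The $k = 0$ summand vanishes for $i = 1$ by the tilting property \eqref{eq:AE} of $E$ on $\bX$, so acyclicity of $\Etilde$ is equivalent to the vanishing of $\Ext^1_\bX(E_j, E_i \otimes \scO_\bX(-k\omega_\bX))$ for every $k \geq 1$ and every pair of line-bundle summands $E_i, E_j$ of $E$. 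Serre duality on $\bX$ then rewrites this as the vanishing of
\begin{align*}
\Hom_\bX\!\left(E_i, E_j \otimes \scO_\bX(m \omega_\bX)\right) \;=\; H^0\!\left(\bX,\; E_j \otimes E_i^{-1} \otimes \scO_\bX(m\omega_\bX)\right)
\end{align*}
for every $i, j$ and every integer $m \geq 1$. This is a statement about effective classes in the Picard group $L$ of $\bX$, in which $\omega_\bX$ has the standard expression $(n-2)\vec{c} - \sum_i \vec{x}_i$.

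I would dispatch the Euclidean and hyperbolic cases first. In the Euclidean case $\omega_\bX$ is a nonzero torsion element of $L$, so $\scO_\bX(m\omega_\bX) \cong \scO_\bX$ for some $m \geq 1$ and $\Hom(E_i, E_i \otimes \scO_\bX(m\omega_\bX))$ contains the identity, breaking acyclicity. In the hyperbolic case $\omega_\bX$ has positive degree, so $\scO_\bX(m\omega_\bX)$ is effective for sufficiently large $m$, again breaking acyclicity. In the spherical case $\omega_\bX$ has negative degree, so the required vanishing holds automatically for large $m$ and only finitely many cases need to be examined. Running through Table~\ref{tb:spherical_wt}, I would verify by a direct combinatorial calculation in $L$ that for type $A_{p+q}$, i.e.\ weight $(1, p, q)$, no pair $(E_i, E_j)$ and no $m \geq 1$ produces an effective twist; for each of the remaining types $D_{n+2}, E_6, E_7, E_8$, I would exhibit an explicit witness $(i, j, m)$, coming from the third arm of the Dynkin diagram, for which the class is effective. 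The main obstacle is the type-$A$ direction, since the vanishing has to be established uniformly across all pairs of tilting summands and all $m \geq 1$; the other four spherical types each require only a single explicit effective class, which is a short finite check.
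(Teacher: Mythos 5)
Your proposal follows essentially the same route as the paper's proof: push forward along the affine map $\pi$, use the decomposition $\pi_*\scO_\bK \cong \bigoplus_{k\ge 0}\scO_\bX(-k\omega_\bX)$ and Serre duality to reduce acyclicity to the vanishing of $H^0(E_i\otimes E_j^\vee\otimes\scO_\bX(m\omega_\bX))$ for $m\ge 1$, observe that the diagonal case $i=j$ forces $\bX$ to be spherical, and then settle the five spherical types case by case (with the non-type-$A$ witnesses coming from the third arm, exactly as in the paper's computation $H^0(\scO_\bX((n-2)\vecx_3))\ne 0$ for $D_{n+2}$). The only difference is presentational — you split the non-spherical exclusion into Euclidean/hyperbolic subcases where the paper invokes the equivalent effectivity criterion directly — so this matches the paper's argument.
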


The endomorphism dg algebra $\dgend \Etilde$
is quasi-isomorphic to the 2-Calabi-Yau completion
(or the derived 2-preprojective algebra)
$$
 \dgend \Etilde \cong \Pi_2(\End E)
$$
of  $\End E$
in the sense of Keller
\cite[4.1]{Keller_DCYC},
and one has an equivalence
$$
 D^b \coh \bK \cong D^b \module \Pi_2(\End E)
$$
of triangulated categories
\cite{Ballard_SLCY,Segal_ADT}.
Theorem \ref{th:tilting} shows that $\dgend \Etilde$ is quasi-isomorphic to $\End \Etilde$
if and only if $\bX$ is spherical of type $A$.

Let $\Gbar \subset PSL_2(\bC)$ be a finite Kleinian group,
so that the quotient stack $\bX = [\bP^1 / \Gbar]$
is a spherical weighted projective line.
The dg algebra $\dgend \Etilde$ is derived-equivalent
to the crossed-product algebra
$\bC[x,y] \rtimes G$,
where $G$ is the inverse image of $\Gbar$
by the projection $\SL_2(\bC) \to \PSL_2(\bC)$.
The crossed product algebra $\bC[x,y] \rtimes G$ in turn
is Morita-equivalent to the path algebra with relations
of the {\em McKay quiver} of $G$.
Kronheimer's construction
\cite{Kronheimer_ALE}
(cf. also \cite[Section 4]{Nakajima_LHSPS})
gives a fine moduli interpretation of the crepant resolution
$Y$ of $\bC^2/G$
in terms of {\em framed} representations
of the McKay quiver.

Let $\bX$ be a weighted projective line
with at most three orbifold points
and $\pi : \bK \to \bX$ be the canonical bundle.
Let further $(\Qtilde, \Itilde)$ be the quiver with relations
describing the endomorphism algebra of the vector bundle $\Etilde := \pi^* E$
on $\bK$.
In Section \ref{sc:moduli2},
we introduce the moduli stack $\scM_\theta^{\fr}(\Qtilde, \Itilde; \bsone)$
of a {\em framed refined representation} of $(\Qtilde, \Itilde)$
of dimension vector $\bsone$.
Unlike the construction of Kronheimer,
we frame arrows instead of vertices.
Although $\bC \Qtilde/\Itilde$ is not derived-equivalent to $\bK$,
we can show the following:

\begin{theorem} \label{th:main2}
For a suitable choice $\vartheta$ of a stability parameter,
there is an isomorphism
$\scM_\vartheta^{\fr}(\Qtilde, \Itilde; \bsone) \cong \bK$
of stacks
sending the tautological bundle to $\Etilde$.
\end{theorem}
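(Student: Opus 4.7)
The plan is to mirror the proof of Theorem \ref{th:main1} by constructing a universal family of framed refined representations of $(\Qtilde, \Itilde)$ on $\bK$ itself, and then checking universality against an arbitrary family over a test scheme $S$. Since the projection $\pi \colon \bK \to \bX$ is affine with $\pi_\ast \scO_\bK = \bigoplus_{k \ge 0} \omega_\bX^{-k}$, we have
\begin{equation*}
 \End_\bK \Etilde
  = \bigoplus_{k \ge 0} \Hom_\bX(E, E \otimes \omega_\bX^{-k}),
\end{equation*}
so the arrows of $\Qtilde$ split naturally into the $k = 0$ summand (recovering the arrows of $Q$) and the extra arrows indexed by $k \ge 1$. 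On $\bK$ the extra arrows act on $\Etilde = \pi^\ast E$ through multiplication by powers of the tautological section $\tau \in \Gamma(\bK, \pi^\ast \omega_\bX)$, and this additional scalar datum is what the framing will encode.

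First I would verify that equipping $\Etilde$ with the arrows of $Q$ acting by pull-back and the extra arrows acting via the appropriate powers of $\tau$ defines a framed refined representation of $(\Qtilde, \Itilde)$ of dimension vector $\bsone$, and that this family is stable with respect to an appropriate extension $\vartheta$ of the stability parameter appearing in Theorem \ref{th:main1}. This produces a classifying morphism $\Phi \colon \bK \to \scM_\vartheta^{\fr}(\Qtilde, \Itilde; \bsone)$ that, by construction, identifies the pull-back of the tautological bundle with $\Etilde$.

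For the inverse, I would take an arbitrary family over $S$ and forget the framings of the extra arrows, leaving a family of refined $(Q, I)$-representations. Theorem \ref{th:main1} supplies a classifying map $f \colon S \to \bX$, and the framing datum on a distinguished extra arrow, corresponding to multiplication by the fiber coordinate of $\omega_\bX$, provides a section of $f^\ast \omega_\bX$, equivalently an $S$-point of $\bK$ lifting $f$. Compatibility of the framings on the remaining extra arrows with the relations $\Itilde$ should then force this lift to be unique and functorial in $S$, yielding the required inverse $\Psi \colon \scM_\vartheta^{\fr}(\Qtilde, \Itilde; \bsone) \to \bK$.

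The main obstacle will be to organize the extra arrows and the relations $\Itilde$ explicitly enough to conclude that the framing data is recorded by a single scalar on one distinguished fiber arrow, with all other framings determined via $\Itilde$. The hypothesis that $\bX$ has at most three orbifold points should enter precisely at this step, keeping $\Qtilde$ tractable and making the reduction manageable; in particular the three-branch shape of $Q$ allows one to isolate a small, canonical set of fiber arrows whose framing scalars generate the rest. Once the framed moduli is seen to reduce to a refined $(Q, I)$-representation together with a single fiber scalar, the identification with $\bK$ will follow from Theorem \ref{th:main1}.
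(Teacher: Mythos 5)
Your overall strategy---build a universal family on $\bK$, then invert by reducing to Theorem \ref{th:main1} plus a fiber coordinate---is reasonable in outline and genuinely different from the paper, which instead presents $\scM_\vartheta^{\fr}(\Qtilde,\Itilde;\bsone)$ directly as a quotient stack $[\Ztilde^s/\bG_m^{Q_0}]$ and computes that quotient via the Cox ring. But your key step, recovering the fiber direction of $\bK$ from ``the framing datum on a distinguished extra arrow corresponding to multiplication by the fiber coordinate,'' has a genuine gap: no arrow of $\Qtilde$ is labeled by $t$ alone. For $\bX_{2,3,4}$ the pure fiber arrows carry $t^2$, $t^3$ and $t^4$, for $\bX_{3,3,3}$ the loops carry $t^3$, and the remaining new arrows carry mixed monomials $t x_\ell$. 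From an arrow value $w_a = f_a(v_t^{\otimes k})$ with $k\ge 2$ you cannot extract $v_t$ (the $k$-th power map is not injective, and a $k$-th root does not exist in families), and from an arrow labeled $t x_\ell$ you lose all information about $v_t$ wherever $x_\ell$ vanishes, i.e.\ exactly over the orbifold points. So the plan of having ``a single scalar on one distinguished fiber arrow'' generate all other framings via $\Itilde$ cannot be carried out as stated.

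What you are not using is that the definition of a framed refined representation already hands you the fiber coordinate: the framing consists of the four one-dimensional spaces $V_x,V_y,V_z,V_t$ with marked vectors $v_x,v_y,v_z,v_t$, and each $f_a$ is a map out of the monomial tensor product $V_a$. This is exactly how the paper proceeds: after using the gauge group $\bG_m^{Q_0}$ to normalize every $f_a$ and $g$ to $1$, the condition $f_a(v_a)=w_a$ makes every arrow map a monomial in $(v_x,v_y,v_z,v_t)$, the relations $\Itilde$ collapse to the single equation $v_x^{p_1}-v_y^{p_2}+v_z^{p_3}=0$, so the normalized locus is $\Spec S_\bK = \Spec S_\bX\times\Spec\bC[t]$; the residual gauge group is $G=\Spec\bC[L]$ and the unstable locus is $\bszero\times\Spec\bC[t]$, which yields $\bK$ by \eqref{eq:Cox_ring2}. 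If you replace your distinguished-arrow mechanism by the datum $(V_t,v_t)$ itself---in a family, a line bundle with a section, which is precisely the lift of $f\colon S\to\bX$ to $\bK$---your two-morphism argument can be repaired; you would then still need to verify that forgetting the $t$-arrows sends $\vartheta$-stable framed representations to $\vartheta$-stable $K$-refined representations of $(Q,I)$, which your proposal assumes implicitly.
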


It is an interesting problem to give a fine moduli interpretation
of the quotient stack $[\bC^2/G]$
in terms of quiver representations.
Although Theorem \ref{th:main2} falls short of this goal,
it gives a fine moduli interpretation of $\bK$
which sits between $Y$ and $[\bC^2 / G]$.



\ \\[-5mm]

{\em Acknowledgment} :
This work was initiated
at Max Planck Institute for Mathematics,
and completed
at Korea Institute for Advanced Study.
We thank both institutes for hospitality
and nice research environment.
We also thank the anonymous referee for suggesting a number of improvements.

\section{Weighted projective lines after Geigle and Lenzing}
 \label{sc:wpl}

Let $\bX$ be a smooth rational Deligne-Mumford stack
of dimension one without generic stabilizer.
Such a stack is called a {\em weighted projective line} by Geigle and Lenzing.
We recall some basic results from \cite{Geigle-Lenzing_WPC} in this section.

We write the orbifold points on $\bX$ as $\bx_1, \dots, \bx_n$.
The absence of generic stabilizer implies that
the stabilizer group $\Gamma_i$ at $\bx_i$
is a cyclic group
for any $i = 1, \dots, n$,
and we write its order as $p_i$.
Locally around $\bx_i$,
one can take an orbifold chart
$
 [U_i / \Gamma_i] \hookrightarrow \bX,
$
where $U_i$ is an open subscheme
of an affine line $\bA^1 = \Spec \bC[u]$ and
a generator of $\Gamma_i$ acts linearly on $u$
by a primitive $p_i$-th root of unity.
The Picard group of $\bX$ is generated
by $\scO_\bX(\vecx_i)$ for $i = 1, \dots, n$,
where $\scO_\bX(\vecx_i)$ is the dual of $\scO_\bX(-\vecx_i)$
defined as the kernel of the natural morphism
$
 \scO_\bX \to \scO_{\bx_i}
$
to the skyscraper sheaf
$
 \scO_{\bx_i}
  = \big[ \big( \Spec \bC[u]/(u) \big) \big/ \Gamma_i \big];
$
$$
 0 \to \scO_\bX(-\vecx_i) \to \scO_\bX \to \scO_{\bx_i} \to 0.
$$
Define $\scO_\bX(\vecc)$ as the line bundle $\scO_\bX(x)$,
which does not depend on the choice of a point
$x \in \bX \setminus \{ \bx_1, \dots, \bx_n \}$.
One has relations
$$
 \scO_\bX(p_i \vecx_i) = \scO_\bX(\vecc), \quad i = 1, \dots, n,
$$
and the Picard group of $\bX$ is given by
$$
 L
  := \Pic \bX
  = \bZ \vecx_1 \oplus \dots \bZ \vecx_n \oplus \bZ \vecc
      / (p_1 \vecx_1 - \vecc, \dots, p_n \vecx_n - \vecc).
$$
Choose a global coordinate
on the coarse moduli space $X \cong \bP^1$ so that
the images $z_1, \dots, z_n \in X$
of $\bx_1, \ldots, \bx_n \in \bX$ are given in this coordinate
by
$\lambda_1 = \infty$,
$\lambda_2 = 0$,
$\lambda_3 = 1$, and
$\lambda_4, \dots, \lambda_n \in \bP^1 \setminus \{ 0, 1, \infty \}$.
Then the {\em total coordinate ring}
(also known as the {\em Cox ring}) of $\bX$ is
the $L$-graded ring given by
\begin{align} \label{eq:Cox_ring}
 S_\bX
  = \bigoplus_{\veck \in L} H^0(\scO_\bX(\veck))
  = k[X_1, X_2, \dots, X_n] \, \big/
       \lb X_i^{p_i} - X_2^{p_2} + \lambda_i X_1^{p_1} \rb_{i=2}^n,
\end{align}
such that $\deg X_i = \vecx_i$ for $i = 1, \dots, n$,
and $\bX$ is recovered as the quotient stack
\begin{align} \label{eq:bX}
 \bX = \big[ (\Spec S_\bX \setminus \bszero) \big/ G \big]
\end{align}
where $G$ is the affine algebraic group
$
 G = \Spec \bC[L].
$
The graded ring $S_\bX$ is Gorenstein with parameter
$
 \vecomega = (n-2)-\sum_{i=1}^n \vecx_i,
$
and Serre duality on $\bX$ is given by
$$
 \Ext^1(\scE, \scF)
  \cong \Hom(\scF, \scE \otimes \scO_\bX(\vecomega))^\vee
$$
for any coherent sheaves $\scE$ and $\scF$.

Recall that an object $F$ in the derived category $D^b \coh Y$
of a smooth Deligne-Mumford stack $Y$
{\em classically generates} $D^b \coh Y$
if $D^b \coh Y$ is the smallest thick triangulated subcategory of $D^b(Y)$ containing $F$.
The object $F$ is {\em acyclic}
if $\Hom(F, F[i]) = 0$ for any $i \ne 0$.
The object $E$ is called a {\em tilting object}
if it is an acyclic object
which classically generates $D^b \coh Y$.
The direct sum $E = \bigoplus_{i=1}^N E_i$
of the sequence
\begin{align}  \label{eq:EC}
 (E_1, \dots, E_N)
  = (\scO, \scO(\vecx_1), \dots, \scO((p_1-1) \vecx_1)&,
  \scO(\vecx_2), \dots, \scO((p_2-1) \vecx_2), \nonumber \\
  &\dots, \scO(\vecx_n), \dots, \scO((p_n-1) \vecx_n),
  \scO(\vecc))
\end{align}
of line bundles on $\bX$
is a tilting object in $D^b \coh \bX$
by \cite[Proposition 4.1]{Geigle-Lenzing_WPC}.
The endomorphism algebra $\End E$ is isomorphic
to the path algebra $A$ of the quiver $(Q, I)$ with relations
defined in \eqref{eq:quiver0} and \eqref{eq:relations0}.
The line bundles $(E_1, \ldots, E_N)$ on $\bX$ correspond to the vertices
$
 (v_0, v_{1,1}, \ldots, v_{1,p_1-1}, \ldots, v_{n,1}, \ldots, v_{n, p_n-1}, v_1)
$
in such a way that the isomorphism \eqref{eq:AE}
sends the idempotent associated with the vertex
to the identity morphism of the corresponding line bundle.

\section{The moduli space of refined representations}
 \label{sc:refined}


Let $B$ be the subset of $\bZ^{Q_0}$
consisting of
$
 \bb_{i,j}
  := \be_{i,j+1} - \be_{i,j} - \be_{i,1} + \be_0
$
and
$
 \bb_{0} := \be_{0}.
$
This set forms a basis of the group
$
 K
$
defined in \eqref{eq:K}.
We regard $b \in B$
as an element of $\Hom(\bZ^{Q_0}, \bZ)$
by the isomorphism
$\Hom(\bZ^{Q_0}, \bZ) \cong \bZ^{Q_0}$
coming from the standard basis $\{ \be_{\bi} \}_{\bi \in Q_0}$
of $\bZ^{Q_0}$.
The following notion of {\em refinements}
is an adaptation from \cite{Abdelgadir:2012}:

\begin{definition} \label{df:admissibility}
A {\em $K$-refined representation} of $(Q,I)$
is a finite-dimensional representation\\
$((W_\bi)_{\bi \in Q_0}, (w_a)_{a \in Q_1})$
of $Q$ satisfying the relations $I$
together with non-zero elements
\begin{align*}
 f_{i,j} \in \Hom( \Hom(W_0, W_{i,1}), \Hom(W_{i,j}, W_{i,j+1})),
  \quad i = 1, \ldots, n, \  j = 1, \ldots, p_i - 1,
\end{align*}
satisfying
\begin{equation} \label{eq:refinement1}
 f_{i,j} (w_{i,0}) = w_{i,j}, \qquad
 i = 1, \ldots, n, \ 
 j= 1, \ldots p_i-1,
\end{equation}
and a non-zero element $g \in W_0$.
\end{definition}
Note that the vector space
$
 \Hom( \Hom(W_0, W_{i,1}), \Hom(W_{i,j}, W_{i,j+1}))
$
containing $f_{i,j}$ can be written as
$
 W^{\bb_{i,j}} :=
  W_{i,j+1} \otimes W_{i,j}^\vee \otimes W_{i,1}^\vee \otimes W_0.
$
Although Definition \ref{df:admissibility} is given in terms of the basis $B$ of $K$,
one can easily see that the notion of refined representation
is independent of the choice of $B$
and depends only on $K$.
The arrows $a_{i,j}$ in the quiver $Q$
naturally correspond to elements of the Cox ring $S_\bX$ of $\bX$,
and the role of the elements $f_{i,j} \in W^{\otimes \bb_{i,j}}$ is
to fix isomorphisms between arrows
corresponding to the same elements of $S_\bX$.
On the other hand,
the non-zero element $g \in W_0$ rigidifies the automorphism
given by an overall scalar multiplication.

\begin{definition} \label{df:stability}
For a filtration
\begin{equation*}
 \bsW_\bullet =  \lb
  0 \subsetneq \bsW_1
   \subset \ldots \subset \bsW_{k-1}
   \subsetneq \bsW_k=\bsW
 \rb.
\end{equation*}
of representations of $Q$
and an element
$
 \theta \in \Hom(\bZ^{Q_0}, \bZ),
$
we set
$$
 \theta(\bsW_\bullet)= \sum_{j=1}^{k-1} \theta(\dim \bsW_j)
$$
where $\dim \bsW_j \in \bZ^{Q_0}$.
The filtration $\bsW_\bullet$ is said to be {\em proper}
if at least one term is a nonzero proper subspace of $\bsW$.
A refined representation $\bsW$ of dimension vector $\bsone$
is {\em $\theta$-stable}
if $\theta(\bsW) = 0$ and
$\theta(\bsW_\bullet) > 0$
for every proper filtration $\bsW_\bullet$
of refined representations
satisfying $b(\bsW_\bullet) = 0$
for all $b \in B$.
\end{definition}


A stack can be defined either as a category
or as a 2-functor.
The first point of view is technically simpler
due to the lack of need
to introduce the notion of 2-categories,
and the second point of view is amenable to generalization
to higher and derived stacks.

From the first point of view,
our moduli problem can be formulated as follows:

\begin{definition} \label{df:M}
The category $\scM_{\theta} = \scM_\theta(Q,I,K;\bsone)$
of flat families of $\theta$-stable $K$-refined representations of $(Q, I)$
with dimension vector $\bsone$
is defined as follows:
\begin{itemize}
 \item
An object is a collection
$\scW = (S, (\scW_\bi)_{\bi \in Q_0}, (w_a)_{a \in Q_1}, (f_{i,j})_{i,j}, g)$
of
\begin{itemize}
 \item
a scheme $S$,
 \item
invertible $\scO_S$-modules $\scW_\bi$ for each $\bi \in Q_0$,
 \item
morphisms $w_a : \scW_{s(a)} \to \scW_{t(a)}$
of $\scO_S$-modules satisfying the relations \eqref{eq:relations0}
for each $a \in Q_1$,
 \item
non-zero morphisms
$
 f_{i,j} : \scHom_{\scO_S}(\scW_0, \scW_{i,1})
  \to \scHom_{\scO_S}(\scW_{i,j}, \scW_{i,j+1})
$
of $\scO_S$-modules
for $i = 1, \ldots, n$ and $j = 0, \ldots, p_i - 1$
such that $H^0(f_{i,j})$ satisfy the relations \eqref{eq:refinement1},
and
 \item
an isomorphism $g : \scO_S \simto \scW_0$
\end{itemize}
such that the refined representation
determined by the above data is
$\theta$-stable
at every point $s \in S$.
 \item
A morphism from a family $\scW$ to another family $\scW'$
is a morphism $\varphi : S \to S'$ of schemes
together with isomorphisms
$$
 \gamma_\bi : \scW_\bi \simto \varphi^* \scW'_\bi
$$
of $\scO_S$-modules
for all $\bi \in Q_0$
satisfying
\begin{itemize}
 \item
$
 \gamma_{t(a)} \circ w_a \circ \gamma_{s(a)}^{-1} = \varphi^*w_a'
$
for all $a \in Q_1$,
 \item
$
 \gamma_{i,j} \circ f_{i,j} = \varphi^* f'_{i,j}
$
for all $i, j$, and
 \item
$\gamma_0 \circ g = \varphi^*g'$,
\end{itemize}
where
$
 \gamma_{i,j} :
  \scW_0 \otimes \scW_{i,1}^\vee \otimes \scW_{i,j}^\vee \otimes \scW_{i,j+1}
   \to
  \varphi^*\scW_0'\otimes \varphi^*\scW_{i,1}'^\vee \otimes \varphi^*\scW_{i,j}'^\vee \otimes \varphi^*\scW'_{i,j+1}
$
is the morphism induced by $\gamma_\bi$.
\end{itemize}
The forgetful functor
$p : \scM_\theta \to \Sch$
sending $\scW$ to $S$ makes $\scM_\theta$
into a category fibered in groupoid.
\end{definition}

From the second point of view,
the moduli problem is formulated in terms of a 2-functor
$\scM_\theta : \Sch \to \Grpd$
from the category of schemes
(considered as a 2-category
by regarding each scheme as a category with one object
and one morphism)
to the 2-category of groupoids,
which sends a scheme $S$ to the groupoid
consisting of families over $S$.

We will take the first point of view in this paper,
and prove Theorem \ref{th:main1}
along the traditional path of first rigidifying the moduli problem
and then quotienting out by equivalences.

\begin{proof}[Proof of Theorem \ref{th:main1}]

By choosing a basis of $W_\bi$ for every $\bi \in Q_0$,
a refined representation is represented
by a point in the subvaritey $Z$ of
$$
 \scR(Q, K)
  :=  \prod_{a \in Q_1} \End(\bC) \times \prod_{b \in B} \Aut(\bC)
  \cong \bA^{Q_1} \times \bG_m^{B}
$$
cut-out by the relations
\eqref{eq:relations0} and \eqref{eq:refinement1}.
Two elements of $Z$ define isomorphic refined representations
if and only if they lie in the same orbit of the group
$
 \bG_m^{Q_0}
$
under the change-of-basis action.

Given a family $\scW$ of refined representations,
one can choose a sufficiently small open subset $U$
around each point $s \in S$ on the base scheme
such that the invertible sheaves $\scW_i$ are trivial.
By choosing a trivialization,
one obtains a morphism from $U$ to $Z$.
Although these morphisms do not necessarily glue together
to give a morphism $S \to Z$,
the failure of the gluing comes from the action of $\bG_m^{Q_0}$,
so that one can replace $S$
with a principal $\bG_m^{Q_0}$-bundle $T$,
and obtain a morphism $T \to Z$.
This shows that the category $\scM(Q,I,K;\bsone)$
of flat families of not necessarily stable refined representations
of dimension vector $\bsone$ is a stack
isomorphic to the quotient stack $[Z / \bG_m^{Q_0}]$.

Define
$
 \vartheta \in \Hom(\bZ^{Q_0}, \bZ)
$
by
\begin{equation} \label{eq:vartheta}
 \vartheta (\bsW)
  := - |Q_0| \dim W_0 + \sum_{\bi \in Q_0} \dim W_\bi
\end{equation}
and let $Z^s$ denote the open subvariety of $\vartheta$-stable refined representations of $(Q, I)$.
Then one has an isomorphism
\begin{align} \label{eq:isom1}
 \scM_\vartheta(Q,I,K;\bsone) \cong [Z^s/\bG_m^{Q_0}] 
\end{align}
of stacks.
Note that every point in $Z \subset \scR (Q, K)$ lies in the orbit of a point
whose $\bG_m^B$-component is given by $(1, \dots, 1)$, and
the unstable locus consists just of the origin $\bszero \in Z$.
Then the relations \eqref{eq:relations0} give
$$
 Z^s \cap (\bA^{Q_1} \times (1,\ldots,1))
  \cong \Spec(S_\bX) \setminus \bszero.
$$
Let $H$ be the subgroup of $\bG_m^{Q_0}$ fixing the $\bG_m^B$-component,
so that
\begin{align} \label{eq:isom2}
 [Z^s / \bG_m^{Q_0}] \cong [ (\Spec S_\bX \setminus \bszero) / H].
\end{align}
The group of characters of the subgroup $H \subset \bG_m^{Q_0}$ is given by $\bZ^{Q_0} / K$,
which is isomorphic to $\Pic \bX$ by the definition \eqref{eq:K} of $K$.
This shows that $H$ is isomorphic to $G$;
\begin{align} \label{eq:isom3}
 H \cong G := \Spec \bC[L].
\end{align}
By combining \eqref{eq:isom1}, \eqref{eq:isom2}, \eqref{eq:isom3} and \eqref{eq:bX},
one obtains an isomorphism
$$
 \scM_\vartheta(Q,I,K;\bsone) \cong \bX
$$
of stacks.

The tautological line bundles are given
by the images of the standard basis elements of $\bZ^{Q_0}$
under the map $\pic : \bZ^{Q_0} \to \Pic \bX$.
This gives the line bundles \eqref{eq:EC} by definition,
and Theorem \ref{th:main1} is proved.
\end{proof}

\begin{remark}
The group $\Hom(\bZ^{Q_0}, \bZ)$
containing the stability parameter $\vartheta$
can naturally be identified
with the group of characters of $\bG_m^{Q_0}$,
and one can show that the stability condition
comes from the geometric invariant theory
just as in \cite[Theorem 3.5]{Abdelgadir:2012}.
\end{remark}

\section{A tilting bundle on the canonical bundle}
 \label{sc:tilting}

Let $\bK$ be the total space of the canonical bundle of $\bX$, and
$\pi : \bK \to \bX$ be the natural projection.
The canonical bundle of $\bK$ is trivial
by the adjunction formula.
Let $E = \bigoplus_{i=1}^N E_i$ be the tilting object
given in \eqref{eq:EC},
and set $\Etilde = \pi^* E$.

\begin{lemma} \label{lm:generator}
$\Etilde$ classically generates $D^b \coh \bK$.
\end{lemma}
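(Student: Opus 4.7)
My plan is to show that $\Etilde$ is a compact generator of $D_{\mathrm{qc}}(\bK)$ via an adjunction argument using the affineness of $\pi$, and then invoke Bondal--Van den Bergh to upgrade this to classical generation of $D^b \coh \bK$.

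First I would record the key geometric facts. Since $\bK$ is the total space of a line bundle on $\bX$, the projection $\pi : \bK \to \bX$ is affine. Hence $R\pi_* = \pi_*$ on $D_{\mathrm{qc}}(\bK)$, and $\pi_*$ is exact and faithful on quasi-coherent sheaves; the latter is because, under the equivalence between $\mathrm{QCoh}(\bK)$ and quasi-coherent $\pi_*\scO_\bK$-modules on $\bX$, $\pi_*$ is simply the forgetful functor to $\scO_\bX$-modules, which plainly reflects isomorphisms.

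The core step is then an adjunction computation. For any $F \in D_{\mathrm{qc}}(\bK)$, the adjunction $\pi^* \dashv \pi_*$ together with affineness of $\pi$ yields
$$ \Hom_{\bK}(\Etilde, F[n]) \cong \Hom_{\bX}(E, \pi_* F[n]) \quad \text{for all } n \in \bZ. $$
If the left-hand side vanishes for every $n$, then since $E$ is a tilting object on the smooth proper stack $\bX$ --- and in particular a compact generator of $D_{\mathrm{qc}}(\bX)$ by Bondal--Van den Bergh applied to $\bX$ --- we must have $\pi_* F = 0$. Exactness and faithfulness of $\pi_*$ then kill every cohomology sheaf of $F$, forcing $F = 0$. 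Thus $\Etilde$ is a compact generator of $D_{\mathrm{qc}}(\bK)$.

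Finally, Bondal--Van den Bergh applied to $\bK$ says that a compact object generating $D_{\mathrm{qc}}(\bK)$ in the above sense classically generates the subcategory of compact objects, which coincides with $D^b \coh \bK$ since $\bK$ is smooth. This gives the lemma. The only step requiring any care is this final invocation of Bondal--Van den Bergh in the DM-stack setting, together with the identification of compact objects with $D^b \coh \bK$; both are by now standard but should be cited explicitly rather than taken for granted.
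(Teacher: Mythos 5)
Your proposal is correct and follows essentially the same route as the paper: both use the adjunction $\Hom_\bK(\pi^*E, F[n]) \cong \Hom_\bX(E, \pi_*F[n])$ together with the fact that $E$ generates $D(\Qcoh \bX)$ to deduce $\pi_*F = 0$, conclude $F = 0$ from affineness of $\pi$, and invoke Bondal--Van den Bergh (and Neeman) to pass between generation of $D(\Qcoh \bK)$ and classical generation of $D^b\coh\bK$. Your explicit justification that $\pi_*$ reflects zero objects (via exactness and faithfulness) is a slightly more detailed version of the paper's one-line appeal to affineness, but it is not a different argument.
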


\begin{proof}
Recall that for a smooth Deligne-Mumford stack $\frakX$,
the unbounded derived category
$D(\Qcoh \frakX)$ of quasi-coherent sheaves is compactly generated,
and the full subcategory consisting of compact objects
is equivalent to the bounded derived category $D^b \coh \frakX$.
The proof for a quasi-compact, quasi-separated schemes
can be found in \cite[Theorem 3.1.1]{Bondal-van_den_Bergh},
and a much more general result for perfect stacks is given in \cite{Ben-Zvi--Francis--Nadler_IT}.
It follows that an object $F$ of $D^b \coh \frakX$ classically generates $D^b \coh \frakX$
if and only if it {\em generates} $D(\Qcoh \frakX)$
in the sense that the right orthogonal $F^\perp$
consists of zero objects
(Ravenel and Neeman \cite{Neeman_CBK}, cf. also \cite[Theorem 2.1.2]{Bondal-van_den_Bergh}).

Now let $M$ be an object of $D(\Qcoh \bK)$
right orthogonal to $\Etilde$
in the sense that $\Hom(\Etilde, M[i]) = 0$ for all $i \in \bZ$.
The standard adjunction shows
\begin{align*}
 \Hom(\Etilde, M[i])
  = \Hom(\pi^* E, M[i])
  = \Hom(E, \pi_* M[i]),
\end{align*}
which implies $\pi_* M \cong 0$.
This implies $M \cong 0$
since $\pi$ is an affine morphism.
This shows that $\Etilde$ generates $D(\Qcoh \bK)$,
and hence classically generates $D^b \coh \bK$.
\end{proof}

Now we prove Theorem \ref{th:tilting}

\begin{proof}[Proof of Theorem \ref{th:tilting}]
First note that
\begin{align*}
 \Ext^1(\pi^* E_i, \pi^* E_j)
  &\cong \Ext^1 (E_i, \pi_* \pi^* E_j) ) \\
  &\cong \Ext^1 \lb E_i, \bigoplus_{k=0}^\infty E_j \otimes \scO_\bX(- k \vecomega) \rb \\
  &\cong \bigoplus_{k=0}^\infty H^1(E_i^\vee \otimes E_j \otimes \scO_\bX(- k \vecomega)) \\
  &\cong \bigoplus_{k=0}^\infty H^0(E_i \otimes E_j^\vee \otimes \scO_\bX((k+1) \vecomega))^\vee.
\end{align*}
By considering the case $i = j$,
one obtains $H^0(\scO_\bX((k+1) \vecomega) = 0$ for any $k \in \bZ^{\ge 0}$
as a necessary condition for acyclicity.
It is easy to see that this condition is satisfied
if and only if $\bX$ is spherical.

If $\bX$ is spherical of type $A$,
then one has $\vecomega = - \vecx_1 - \vecx_2$,
and one can easily see that
\begin{align*}
 H^0(E_i \otimes E_j^\vee \otimes \scO_\bX((k+1) \vecomega)) = 0
\end{align*}
for any $i, j \in \{ 1, \ldots, N \}$ and any $k \in \bZ^{\ge 0}$.
If $\bX$ is spherical of type $D_{n+2}$,
then one has
\begin{align*}
 H^0(\scO(\vecc) \otimes \scO^\vee \otimes \scO(2 \vecomega))
 &= H^0(\scO_\bX(3 \vecc - 2 \vecx_1 - 2 \vecx_2 - 2 \vecx_3)) \\
 &= H^0(\scO_\bX( (n-2) \vecx_3)) \\
 &= \bC \cdot X_3^{n-2},
\end{align*}
so that $\Etilde$ is not acyclic.
Similarly,
one has
$
 H^0(\scO(\vecc) \otimes \scO^\vee \otimes \scO(6 \vecomega))
  \ne 0
$
if $\bX$ is spherical of type $E_6$, $E_7$ and $E_8$.

For $\Ext^2(\pi^*E_i,\pi^*E_j)$ we have
\begin{align*}
 \Ext^2(\pi^* E_i, \pi^* E_j)
  \cong \Ext^2 (E_i, \pi_* \pi^* E_j) ) 
  \cong \Ext^2 \lb E_i, \bigoplus_{k=0}^\infty E_j \otimes \scO_\bX(- k \vecomega)\rb=0.
\end{align*} 
This concludes the proof
of Theorem \ref{th:tilting}.
\end{proof}

\section{Quivers for the canonical bundle}
The endomorphism algebra $\End \Etilde$ can be described as the path algebra
$\bC \Qtilde / \Itilde$
of a quiver $(\Qtilde, \Itilde)$ with relations.
In this section,
we describe a method to calculate the underlying quiver $\Qtilde$.
%
%
Note that $\bC \Qtilde / \Itilde$ is just an algebra and not a dg algebra,
so that it is not quasi-isomorphic to the endomorphism dg algebra $\dgend \Etilde$
if $\Etilde$ is not acyclic.

Since $\pi^* E_i$ and $\pi^* E_j$ are line bundles,
any element of $\Hom(\pi^* E_i, \pi^* E_j) \subset \End \Etilde$
comes from an element of the Cox ring
$S_\bK := \bigoplus_{\scL \in \Pic \bK} H^0(\scL)$, and
one can label each arrow of $\Qtilde$
with an element of $S_\bK$.

Note that $S_{\bK}$ is isomorphic to $S_{\bX} \otimes \bC[t]$
where $t$ is the tautological section of $\pi^*\scO_\bX(\vecomega)$,
and $\bK$ is recovered as the quotient stack
\begin{align} \label{eq:Cox_ring2}
 \bK =  [((\Spec S_\bX \setminus \bszero) \times \Spec \bC[t]) / G]
\end{align}
just as in \eqref{eq:Cox_ring}.
Here $G = \Spec \bC[L]$ acts on $\Spec \bC[t]$
by the character $\vecomega \in L$.

To obtain $\Qtilde$ from $Q$,
one first replaces the vertices $E_i$ with $\pi^* E_i$
while keeping the arrows fixed.
Then for each primitive element of
$$
 \Hom_\bX(E_i, E_j \otimes \scO_\bX(-k \vecomega))
  \subset \Hom_\bK(\pi^* E_i, \pi^* E_j)
$$
corresponding to an element
$$
 a \in H^0(E_i^\vee \otimes E_j \otimes \scO_\bX(-k \vecomega))
  \subset S_\bX,
$$
we add an arrow from the vertex $\pi^* E_i$
to the vertex $\pi^* E_j$ of this quiver,
which is labeled by $a \cdot t^k \in S_\bK$.

As an example,
let us first consider the case of $\bX = \bX_{2, 3, 4}$.
We will write $x_1$, $x_2$ and $x_3$ as $x$, $y$ and $z$ respectively.
The quiver $Q$ in this case is given by
\vspace{3mm}
\begin{equation}\vspace{3mm}
\begin{psmatrix}[colsep=5mm]
 & & & & \scO(\vecx) \\
 \scO & & \scO(\vecz) & & \scO(2 \vecz) & & \scO(3 \vecz) & & \scO(\vecc), \\
 &[mnode=none] \phantom{xxx} & & \scO(\vecy) &  & \scO(2 \vecy) & &[mnode=none] \phantom{xxx}
\end{psmatrix}
\psset{nodesep=3pt,arrows=->,shortput=nab}
\ncline{2,1}{1,5}
\ncline{1,5}{2,9}
\ncline{2,1}{2,3}
\ncline{2,3}{2,5}
\ncline{2,5}{2,7}
\ncline{2,7}{2,9}
\ncline{2,1}{3,4}
\ncline{3,4}{3,6}
\ncline{3,6}{2,9}
\end{equation}
and the underlying quiver $\Qtilde$ for $\End \Ktilde$ is given by
\begin{equation} \label{eq:quiver_K}
        \begin{pspicture}(-2.09,-2)(5.8,1.91)
          \cnodeput(0,0){z}{3}
          \cnodeput(1.9696,-0.1736){2z}{4} 
          \cnodeput(3.9392,-0.3473){3z}{5}
          \cnodeput(1.9696,1.9057){x}{6}
          \cnodeput(0.1290,-1.5345){y}{1}
          \cnodeput(2.7585,-1.7663){2y}{2}
          \cnodeput(-2.0842,0.4202){0}{0}
          \cnodeput(5.7942,-0.2744){c}{7}
          \psset{nodesep=0pt}
          \ncline{->}{0}{z} \lput*{:U}{$z$}
          \ncline{->}{z}{2z} \lput*{:U}(0.6){$z$}
          \ncline{->}{2z}{3z} \lput*{:U}{$z$}
          \ncline{->}{3z}{c} \lput*{:U}{$z$}
          \ncarc[arcangle=10]{->}{0}{x} \lput*{:U}{$x$}
          \ncarc[arcangle=10]{->}{x}{c} \lput*{:U}{$x$}
          \ncarc[offset=1pt,arcangle=-15]{->}{0}{y} \lput*{:U}{$y$}
          \ncarc[linecolor=blue, offset=-1pt,arcangle=15]{<-}{0}{y} \lput*{:U}{$t^4$}
          \ncarc[offset=1pt,arcangle=-15]{->}{y}{2y} \lput*{:U}{$y$}
          \ncarc[offset=1pt,arcangle=-15]{->}{2y}{c} \lput*{:U}{$y$}
          \ncarc[linecolor=blue, offset=-1pt,arcangle=15]{<-}{2y}{c} \lput*{:U}{$t^4$}
          \ncline[linecolor=blue]{->}{z}{2y} \lput*{:U}(0.2){$tx$}
          \ncline[linecolor=blue]{->}{y}{3z} \lput*{:U}(0.8){$tx$}
          \ncline[linecolor=blue]{->}{y}{x} \lput*{:U}(0.6){$tz$}
          \ncline[linecolor=blue]{->}{x}{2y} \lput*{:U}(0.4){$tz$}
          \ncarc[offset=0pt,arcangle=25,linecolor=blue]{->}{z}{x} \lput*{:U}{$ty$}
          \ncarc[arcangle=15,linecolor=blue]{->}{x}{3z} \lput*{:U}{$ty$}
          \ncline[linecolor=blue]{->}{2y}{2z} \lput*{:180}{$t^2$}
          \ncline[linecolor=blue]{->}{2z}{y} \lput*{:180}{$t^2$}
          \ncarc[offset=0pt, arcangle=5, linecolor=blue]{->}{x}{z} \lput*{:180}{$t^3$}
          \ncarc[arcangle=10,linecolor=blue]{->}{3z}{x} \lput*{:180}{$t^3$}
  \end{pspicture}.
\end{equation}
For example, the arrow
$
 \circlenode{1}{1} \hspace{12mm} \circlenode{0}{0}
   \ncline[linecolor=blue]{->}{1}{0} \ncput*{t_{\phantom{x}}^4}
$
from the vertex $\circlenode{1}{1} = \scO_\bK(\vecy)$
to the vertex $\circlenode{0}{0} = \scO_\bK$
comes from
\begin{align*}
 \Hom(\scO_\bX(\vecy), \scO_\bX(- 4 \vecomega))
  &= H^0(\scO_\bX(-\vecy + 4 (\vecx + \vecy + \vecz - \vecc))) \\
  &= H^0(\scO_\bX(4 \vecx + 3 \vecy + 4 \vecz - 4 \vecc))) \\
  &= H^0(\scO_\bX(2 \vecc + \vecc + \vecc - 4 \vecc))) \\
  &= H^0(\scO_\bX),
\end{align*}
and other new arrows are computed similarly.

As another example,
the quiver $\Qtilde$
for the Euclidean weighted projective line
$\bX_{3,3,3}$ is shown
(without labels) as follows:
\vspace{5mm}
\begin{equation}\vspace{3mm}
\begin{psmatrix}[colsep=20mm]
 & \scO(\vecx) & \scO(2 \vecx) \\
 \scO & \scO(\vecy) & \scO(2 \vecy) & \scO(\vecc) \\
 & \scO(\vecz) & \scO(2 \vecz)
\end{psmatrix}.
\psset{nodesep=3pt,arrows=->,shortput=nab}
\ncline{2,1}{1,2}
\ncline{1,2}{1,3}
\ncline{1,3}{2,4}
\ncline{2,1}{2,2}
\ncline{2,2}{2,3}
\ncline{2,3}{2,4}
\ncline{2,1}{3,2}
\ncline{3,2}{3,3}
\ncline{3,3}{2,4}
\ncline[linecolor=blue]{1,2}{2,3}
\ncline[linecolor=blue]{1,2}{3,3}
\ncline[linecolor=blue,offset=-2pt]{2,2}{1,3}
\ncline[linecolor=blue]{2,2}{3,3}
\ncline[linecolor=blue]{3,2}{1,3}
\ncline[linecolor=blue]{3,2}{2,3}
\nccircle[angleA=0,angleB=180,ncurv=5,linecolor=blue,nodesep=0pt]{->}{1,2}{.4}
\nccircle[angleA=0,angleB=180,ncurv=5,linecolor=blue,nodesep=0pt]{->}{1,3}{.4}
\nccircle[angleA=90,angleB=270,ncurv=5,linecolor=blue,nodesep=3pt]{->}{2,1}{.4}
\nccircle[angleA=0,angleB=180,ncurv=5,linecolor=blue,nodesep=0pt]{->}{2,2}{.4}
\nccircle[angleA=0,angleB=180,ncurv=5,linecolor=blue,nodesep=0pt]{->}{2,3}{.4}
\nccircle[angleA=270,angleB=90,ncurv=5,linecolor=blue,nodesep=0pt]{->}{2,4}{.45}
\nccircle[angleA=180,angleB=0,ncurv=5,linecolor=blue,nodesep=0pt]{->}{3,2}{.4}
\nccircle[angleA=180,angleB=0,ncurv=5,linecolor=blue,nodesep=0pt]{->}{3,3}{.4}
\end{equation}
For example,
the arrow from
$\scO_\bK(\vecx)$ to $\scO_\bK(2 \vecy)$
is labeled as $t \cdot z$
and comes from
\begin{align*}
 \Hom(\scO_\bX(\vecx), \scO_\bX(2 \vecy - \vecomega))
  &= H^0(\scO_\bX(-\vecx + 2 \vecy + (\vecx + \vecy + \vecz - \vecc))) \\
  &= H^0(\scO_\bX(3 \vecy + \vecz - \vecc))) \\
  &= H^0(\scO_\bX(\vecc + \vecz - \vecc))) \\
  &= H^0(\scO_\bX(\vecz)),
\end{align*}
the arrow from $\scO$ to itself
is labeled as $t^3$
and comes from
\begin{align*}
 \Hom(\scO, \scO(3 \vecomega))
  &= H^0(\scO) \cong \bC,
\end{align*}
and similarly for all other new arrows.
Note that one has $3 \vecomega = 0$ in this case.

\begin{remark} \label{rm:Qtilde}
The quivers $Q$ and $\Qtilde$ are different when $n=3$, since one has
\begin{align*}
 \Hom(\scO_\bX(\vecx_i), \scO_\bX((p_j-1) \vecx_j - \vecomega))
  &= H^0(\scO_\bX(-\vecx_i + (p_j-1) \vecx_j
   + (\vecx_i + \vecx_j + \vecx_\ell - \vecc))) \\
  &= H^0(\scO_\bX(\vecx_\ell)))
\end{align*}
for mutually distinct $(i, j, \ell)$. Therefore one has a `new' arrow
labeled $t x_\ell$
from $\pi^*\scO_\bX(\vecx_i)$ to $\pi^*\scO_\bX((p_j-1)\vecx_j)$.

However, if $n \ge 4$, then one has
\begin{align*}
 \Hom(\scO_\bX, \scO_\bX(\vecc - \vecomega))
  &= H^0(\scO_\bX(\vecc + (\vecx_1 + \cdots + \vecx_n - (n-2) \vecc))) \\
  &= H^0(\scO_\bX(\vecx_1 + \cdots + \vecx_n - (n-3) \vecc)) \\
  &= 0,
\end{align*}
and $\Ext^\ell(E_i, E_j \otimes \scO_\bX(-k \vecomega))$ vanishes
similarly for any $i$, $j$, $\ell$ and positive $k$,
so the quiver $\Qtilde$ coincides with $Q$,
and the relation $\Itilde$ coincides with $I$.
\end{remark}

\section{The canonical bundle as a fine moduli space}
 \label{sc:moduli2}

Let $\bX$ be a weighted projective line
with at most three orbifold points,
so that the quiver $\Qtilde$ has the same set of vertices as $Q$
and a different set of arrows from $Q$ as explained in \pref{rm:Qtilde}.
The Cox ring $S_\bK$ is graded by $\Pic \bK$,
which is isomorphic to $L := \Pic \bX$
by the pull-back morphism $\pi^* : \Pic \bX \to \Pic \bK$.
Each arrow of $\Qtilde$ is naturally labeled
by a monomial in the Cox ring
as in \eqref{eq:quiver_K},
and one can define a {\em framed refined representation}
of $(\Qtilde, \Itilde)$
as follows:

\begin{definition}
A {\em framed refined representation} of $(\Qtilde, \Itilde)$
of dimension vector $\bsone$ consists of
\begin{itemize}
 \item
a representation
$((W_\bi)_{\bi \in Q_0}, (w_a)_{a \in \Qtilde_1})$
of the quiver $\Qtilde$ satisfying the relations $\Itilde$
of dimension vector $\bsone$,
 \item
one-dimensional vector spaces $V_x$, $V_y$, $V_z$, $V_t$,
 \item
elements $v_x \in V_x$, $v_y \in V_y$, $v_z \in V_z$, $v_t \in V_t$,
 \item
non-zero linear maps
$f_a : V_a \to \Hom(W_{s(a)}, W_{t(a)})$
for each $a \in \Qtilde_1$, and
 \item
a non-zero element $g \in W_0$
\end{itemize}
satisfying
\begin{equation} \label{eq:framing}
 f_a(v_a) = w_a 
\end{equation}
for all $a \in \Qtilde_1$.
\end{definition}
Here, the vector space $V_a$ is defined as
$
 V_x^{\otimes a_x} \otimes V_y^{\otimes a_y}
  \otimes V_z^{\otimes a_z} \otimes V_t^{\otimes a_t}
$
when the arrow $a$ corresponds to
the monomial $x^{a_x} y^{a_y} z^{a_z} t^{a_t}$
in $S_\bK$,
and the element $v_a \in V_a$ is defined similarly as
$
 v_x^{\otimes a_x} \otimes v_y^{\otimes a_y}
  \otimes v_z^{\otimes a_z} \otimes v_t^{\otimes a_t}.
$
Note that unlike the construction of Kronheimer
\cite{Kronheimer_ALE},
we frame arrows instead of vertices.
The category $\scM^\fr(\Qtilde, \Itilde; \bsone)$
of flat families of framed refined representations
of dimension vector $\bsone$ is defined
just the same way as in Definition \ref{df:M}.

Now we prove Theorem \ref{th:main2}.

\begin{proof}[Proof of Theorem \ref{th:main2}]
By choosing bases of $W_\bi$ and $V_i$,
a refined representation is represented by a point
in the subvariety $\Ztilde$ of
$$
 \bA^{\Qtilde_1}
  \times \bA^4
  \times (\bG_m)^{\Qtilde_1}
  \times \bG_m
$$
consisting of $((w_a)_a, (v_i)_i, (f_a)_a, g)$
satisfying the relations $\Itilde$
and \eqref{eq:framing}.
Two elements of $\Ztilde$ define isomorphic representations
if they lie in the same orbit of the group
$
 \bG_m^{Q_0}
$
under the change-of-basis action.
This shows that the category $\scM^\fr(\Qtilde, \Itilde; \bsone)$
is a stack isomorphic to $[\Ztilde / \bG_m^{Q_0}]$.

Let $\vartheta$ be the stability parameter
given in \eqref{eq:vartheta},
which defines a stability of framed refined representations 
of $(\Qtilde, \Itilde)$
just the same way as in Definition \ref{df:stability}.
The moduli space $\scM_\vartheta^\fr(\Qtilde, \Itilde; \bsone)$
of $\vartheta$-stable framed refined representations of $(\Qtilde, \Itilde)$
of dimension vector $\bsone$
is an open substack of $\scM^\fr(\Qtilde, \Itilde; \bsone)$,
which is isomorphic to the quotient stack
$
 [\Ztilde^s / \bG_m^{Q_0}]
$
of the subvariety $\Ztilde^s \subset \Ztilde$ consisting of $\vartheta$-stable framed representations;
\begin{align} \label{eq:K1}
 \scM_\vartheta^\fr(\Qtilde, \Itilde; \bsone)
  \cong [\Ztilde^s / \bG_m^{Q_0}].
\end{align}
Every point in $\Ztilde$ lies in the orbit of a point
whose $\bG_m^{\Qtilde_1} \times \bG_m$-component
is given by $(1, \ldots, 1)$.
Such a point is determined by its $\bA^4$-component,
since the $\bA^{\Qtilde_1}$-component is determined
from the $\bA^4$-component
by \eqref{eq:framing}.
A point with the $\bA^4$-component
$(v_x, v_y, v_z, v_t)$
satisfies the relations $\Itilde$
if and only if the relation
$$
 v_x^{p_1} - v_y^{p_2} + v_z^{p_3} = 0
$$
is satisfied.
This shows that
\begin{align} \label{eq:K2}
 \Ztilde \cap
  \lb \bA^{\Qtilde_1} \times
   \Spec \bA^4 \times \{ (1, \ldots, 1) \} \rb
  \cong \Spec S_\bK.
\end{align}
The characters of the subgroup of $\bG_m^{Q_0}$
fixing the $\bG_m^{\Qtilde_1} \times \bG_m$-component
is given by $\bZ^{Q_0} / K \cong L$
just as in the case of $Q$.
Unstable locus consists of
\begin{align} \label{eq:K3}
 \bszero \times \Spec \bC[t]
  \subset \Spec S_\bX \times \Spec \bC[t] = \Spec S_\bK,
\end{align}
so that the moduli space is given by
\begin{align} \label{eq:K4}
 [((\Spec S_\bX \setminus \bszero) \times \Spec \bC[t]) / G].
\end{align}
Now \eqref{eq:Cox_ring2} shows that
the moduli space is isomorphic to $\bK$,
and the tautological line bundles are given by $\pi^* E_i$.
\end{proof}

\bibliographystyle{amsalpha}
\bibliography{bibs}

\noindent
Tarig Abdelgadir 

Mathematics Section,
The Abdus Salam International Centre for Theoretical Physics,
Strada Costiera 11,
I - 34151 Trieste,
Italy.

{\em e-mail address}\ : \  tarig.m.h.abdelgadir@gmail.com

\ \vspace{0mm} \\

\noindent
Kazushi Ueda

Department of Mathematics,
Graduate School of Science,
Osaka University,
Machikaneyama 1-1,
Toyonaka,
Osaka,
560-0043,
Japan.

{\em e-mail address}\ : \  kazushi@math.sci.osaka-u.ac.jp
\ \vspace{0mm} \\

\end{document}